\title{Higher-order Analogues of the Slice Genus of a Knot}
\author{Peter D. Horn}
\address{Department of Mathematics\\Columbia University\\MC 4403\\2990 Broadway\\New York, NY 10027}
\email{pdhorn@math.columbia.edu}
\urladdr{http://math.columbia.edu/~pdhorn}
\newtheorem{thm}{Theorem}[section]    
\newtheorem{lem}[thm]{Lemma}          
\newtheorem{cor}[thm]{Corollary}
\newtheorem{prop}[thm]{Proposition}
\newtheorem*{mainresultthm}{Theorem~\ref{arbhinog}}
\newtheorem*{mainresult}{Corollary~\ref{nogisbetterthanslicegenus}}      
\newtheorem*{secondaryresult}{Theorem~\ref{sigboundgen}}
\theoremstyle{definition}
\newtheorem{defn}[thm]{Definition}    
\newtheorem{exam}[thm]{Example}
\newtheorem*{rem}{Remark}             
\newcommand{\inv}{^{-1}}
\newcommand{\p}{\pi_1}
\newcommand{\del}{\partial}
\renewcommand{\a}{\alpha}
\renewcommand{\b}{\beta}
\newcommand{\Gn}[1]{G^{(#1)}}
\newcommand{\pr}[2]{\p({#1})_r^{(#2)}}
\newcommand{\Gr}[2]{{#1}_r^{(#2)}}
\renewcommand{\H}[2]{H_{#1}\left({#2}\right)}
\newcommand{\pn}[2]{\p\left({#1}\right)^{({#2})}}
\newcommand{\F}{{\mathcal{F}}}
\newcommand{\C}{{\mathcal{C}}}
\newcommand{\R}{{\mathbb{R}}}
\newcommand{\Z}{{\mathbb{Z}}}
\newcommand{\Q}{{\mathbb{Q}}}
\newcommand{\G}{{\mathcal{G}}}
\newcommand{\N}{{\mathbb{N}}}
\begin{document}


\begin{abstract}    
	For certain classes of knots we define geometric invariants called higher-order genera.  Each of these invariants is a refinement of the slice genus of a knot.  We find lower bounds for the higher-order genera in terms of certain von Neumann $\rho$-invariants, which we call higher-order signatures.  The higher-order genera offer a refinement of the Grope filtration of the knot concordance group.
\end{abstract}

\maketitle

\section{Introduction}

\footnotetext{The published version of this paper may be accessed at \url{http://imrn.oxfordjournals.org/cgi/content/abstract/rnq115?ijkey=ZIkXMttJWMD1Uz8&keytype=ref.}}

A \emph{knot} is an embedding of the circle into the three-sphere.  All embeddings are required to be topologically flat or smooth.  Two knots $K_{0}, K_{1}$ are \emph{concordant} if there is an annulus $A$ embedded in $S^{3}\times[0,1]$ in such a way that $A\cap\left(S^{3}\times\{i\}\right)=K_{i}$ for $i=0,1$.  If a knot $K$ is concordant to the unknot, we call $K$ a \emph{slice knot}.  Given two knots, one can ``add'' them via the connected sum operation $\#$, defined in~\cite{dR76}.  Equipped with the connected sum operation, the set of knots modulo (topological or smooth) concordance forms the (topological or smooth) \emph{knot concordance group} $\C$.  The class of slice knots serves as the identity element of this group.

Cochran, Orr and Teichner have introduced two filtrations of the topological knot concordance group $\C$~\cite{COT03}.  The $(n)$-solvable filtration $$\cdots\subset \F_{n.5}\subset \F_{n}\subset\cdots\subset\F_{1.5}\subset\F_{1}\subset\F_{0.5}\subset\F_{0}\subset \C$$  is defined in terms of algebraic properties on the second homology of certain $4$-manifolds, each of whose boundary is $0$-surgery on a knot.  The Grope filtration $$\cdots\subset \G_{n+2.5}\subset \G_{n+2}\subset\cdots\subset\G_{3}\subset\G_{2.5}\subset\G_{2}\subset \C$$ is defined much more geometrically.  Rigorous definitions of these filtrations will be provided below.  These filtrations are related to one another in the sense that $\G_{n+2}\subset\F_{n}$ for all $n\in\frac{1}{2}\N$~\cite[Theorem 8.11]{COT03}.  Recently, Cochran, Harvey and Leidy proved that $\F_{n}/\F_{n.5}$ has infinite rank for all $n$~\cite{CHL07b}.  Subsequently, the author proved the analogous result for the Grope filtration~\cite{pH08}.  These results were proven using signatures of certain $4$-manifolds.  While algebraic techniques are appropriate when working with the $(n)$-solvable filtration, they do not reflect the geometric nature of the Grope filtration.  The main focus of this paper is to define a geometric invariant that will distinguish knots in $\G_{n+2}$.

Given a knot $K$, the \emph{slice genus} of $K$ is the minimal genus of surfaces embedded in $D^{4}$ with boundary equal to $K\subset S^{3}=\del D^{4}$.  The slice genus is a concordance invariant of $K$.  In the spirit of the Cochran-Orr-Teichner filtrations of $\C$, we introduce a series of refinements of the slice genus.  For knots in $\G_{n+2}$, we will define a concordance invariant called the $n^{\textrm{th}}$-order genus.  Our main result is that the $n^{\textrm{th}}$-order genus distinguishes knots in $\G_{n+2}$ that are not distinguished by the slice genus.  That is, each of our higher-order genera is a refinement of the notion of slice genus.

\begin{mainresultthm}
	For any $n\geq 1$, there is a fixed $g$ and a knot in $\G_{n+2}$ with slice genus bounded above by $g$ and arbitrarily high $n^{\textrm{th}}$-order genus.  Furthermore, this knot has infinite order in $\G_{n+2}/\F_{n.5}$.
\end{mainresultthm}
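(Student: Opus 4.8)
The plan is to exhibit the desired knots via iterated genetic infection applied to a single fixed seed knot $R$, letting a parameter control only the infecting knots. Fix a knot $R$ that bounds a Seifert surface $F$ of some genus $g$ depending only on $n$, and choose simple closed curves $\eta_1,\dots,\eta_k$ lying on $F$ which are unknotted in $S^3$ and which, viewed in $\p(S^3\setminus R)$, lie deep in the derived series---deep enough that infection along them yields a knot bounding a grope of height $n+2$. For a tuple of auxiliary knots $\vec{J}=(J_1,\dots,J_k)$, let $R(\vec{J})$ denote the result of infecting $R$ along the $\eta_i$ using the $J_i$. I will argue that, for an appropriate choice of the $\eta_i$, every $R(\vec{J})$ lies in $\G_{n+2}$ and has slice genus at most $g$, while a suitable choice of the $J_i$ forces the relevant higher-order signature, hence the $n^{\textrm{th}}$-order genus, to be arbitrarily large.

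For the slice-genus bound I would exploit that the infection curves $\eta_i$ lie on the fixed surface $F$. Infection along a curve on $F$ replaces a neighborhood of that curve but does not change the homeomorphism type of the surface; consequently $R(\vec{J})$ again bounds an embedded (reknotted) genus-$g$ Seifert surface in $S^3$, and therefore a genus-$g$ surface in $D^4$. Thus the slice genus of every member of the family is at most $g$, yielding the fixed upper bound asserted in the statement, independent of the $J_i$.

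The lower bound on the $n^{\textrm{th}}$-order genus comes from Theorem~\ref{sigboundgen}, which gives the higher-order signature as a lower bound for the $n^{\textrm{th}}$-order genus. The standard infection formula for von Neumann $\rho$-invariants expresses the higher-order signature of $R(\vec{J})$ as a sum of the zeroth-order $\rho$-invariants (the integrals of the Levine--Tristram signature functions) of the $J_i$, weighted by the derived-series data of the $\eta_i$. Choosing each $J_i$ with large signature integral makes this sum arbitrarily large; the key computation is that the contributions do not cancel, which I expect to follow from the $\eta_i$ representing linearly independent classes in the appropriate higher-order homology module together with the non-triviality of the relevant representation. Membership in $\G_{n+2}$ is arranged by placing the $\eta_i$ deep enough in the derived series: infecting along a curve that bounds a grope of height $n+1$ in the complement raises the grope height to $n+2$, so with $R$ chosen to furnish the base stages this assembles into a height-$(n+2)$ grope bounded by $R(\vec{J})$.

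Finally, infinite order in $\G_{n+2}/\F_{n.5}$ follows because these same higher-order $\rho$-invariants obstruct $(n.5)$-solvability, so (after the reduction $\G_{n+2}\subset\F_n$) a knot in $\F_{n.5}$ has controlled higher-order signature of this type. Taking connected sums $\#^{m}R(\vec{J})$ and showing the signatures grow linearly in $m$---using additivity of $\rho$ under connected sum together with the Cheeger--Gromov bound to rule out torsion, in the manner of \cite{CHL07b} and \cite{pH08}---yields infinite order in the quotient. The main obstacle is the simultaneous control of slice genus and higher-order signature, since these requirements compete: enlarging the signature demands infecting with complicated knots, while bounding the slice genus demands that the infection be invisible to the Seifert genus. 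The technical heart of the argument is therefore the choice of infection curves lying on the fixed genus-$g$ surface yet simultaneously deep in the derived series and homologically essential at the $n^{\textrm{th}}$ stage, so that the infection is genus-neutral but signature-visible.
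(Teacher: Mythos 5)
Your overall strategy --- infection of a fixed seed knot $R$ along derived-series-deep unknotted curves by knots with large $\rho_0$, with Theorem~\ref{sigboundgen} converting signature estimates into genus estimates --- is indeed the paper's strategy, but two of your steps have genuine gaps.

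First, and most seriously, the quantifier over $(n)$-solutions. Theorem~\ref{sigboundgen} is an \emph{existential} statement: \emph{some} $\rho\in\mathfrak{S}^n(K)$ satisfies $|\rho|\leq 4\,g_n(K)$. To force $g_n(K)$ to be large you must therefore show that \emph{every} element of $\mathfrak{S}^n(K)$ is large, i.e., that for every $(n)$-solution $W$ of $M_K$ the induced representation $\phi:\p(M_K)\to\p(W)/\pr{W}{n+1}$ yields a large $\rho$-invariant. Your argument addresses only ``the relevant representation,'' and the mechanism you propose --- linear independence of the $[\eta_i]$ in a higher-order homology module --- cannot do this job: the representation depends on the arbitrary solution $W$, which could a priori send every $\eta_i$ into $\pr{W}{n+1}$, making every coefficient in the infection formula vanish. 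The paper's fix is a cobordism argument: glue the infection cobordism (capped off with $(0)$-solutions for the $J_m$) to an arbitrary $(n)$-solution $W$ of $M_K$ to produce an $(n)$-solution $V$ of $M_R$, and then invoke the defining property of the Cochran--Teichner curves~\cite{CT07}: in \emph{every} $(n)$-solution $V$ of $M_R$, some $\eta_k$ survives outside $\pr{V}{n+1}$, hence outside $\pr{W}{n+1}$. That robustness property is a deep theorem, not a consequence of homological independence; without it (or an equivalent) the proof does not close. Cancellation, by contrast, is a non-issue: one infects all curves by the same $J_m$ with $\rho_0(J_m)=\frac{4m}{3}>0$, the coefficients $\epsilon_i$ are $0$ or $1$, and the remaining term $\rho(M_R,\phi_R)$ is uniformly bounded by the Cheeger--Gromov constant of $M_R$.

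Second, the existence of your curves. You require the $\eta_i$ to lie \emph{on} a fixed Seifert surface $F$, be unknotted, lie in $\pn{S^3-R}{n}$, and (implicitly, for the first point to work) satisfy the robustness property above; no such curves are exhibited, and you yourself flag this as the technical heart. The paper never needs curves on the surface: since the Cochran--Teichner curves have linking number zero with $R$, one tubes a Seifert surface for $R$ around them to obtain a surface of fixed genus \emph{disjoint} from the $\eta_i$, and this surface survives the infection untouched, bounding the slice genus independently of $m$. Your ``reknotting'' observation for curves lying on $F$ is plausible as stated, but it imposes an extra constraint whose compatibility with the derived-series depth and robustness requirements you never verify; the tubing trick removes the constraint altogether. (Grope membership and the infinite-order statement are, in both your sketch and the paper, outsourced to~\cite{CT07}, so those citations are acceptable, though note the paper also deduces $K\notin\F_{n.5}$ directly from Proposition~\ref{n.5obstruction} once $0\notin\mathfrak{S}^n(K)$ is known.)
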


\begin{mainresult}
	For any $n\geq 1$, there are infinitely many knots that lie in $\G_{n+2}$ whose slice genera are equal but whose $n^{\textrm{th}}$-order genera are distinct.
\end{mainresult}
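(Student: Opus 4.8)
The plan is to derive the corollary directly from Theorem~\ref{arbhinog} by a pigeonhole argument, so that all of the genuine content is carried by the theorem. First I would invoke Theorem~\ref{arbhinog} for the given $n\ge 1$ to produce a fixed integer $g$ together with a family of knots $\{K_j\}_{j\in\N}$, each lying in $\G_{n+2}$ and each having slice genus at most $g$, whose $n^{\textrm{th}}$-order genera are unbounded; that is, for every $N$ there is some $K_j$ in the family with $n^{\textrm{th}}$-order genus exceeding $N$.

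The key observation is that the slice genus is a nonnegative integer, so on this family it takes values only in the finite set $\{0,1,\dots,g\}$. I would partition the family according to slice genus, writing $S_i=\{K_j : \text{slice genus of } K_j = i\}$ for $0\le i\le g$, so that the whole family is the finite union $\bigcup_{i=0}^{g} S_i$. Because a finite union of sets each having bounded $n^{\textrm{th}}$-order genus would itself have bounded $n^{\textrm{th}}$-order genus, the unboundedness supplied by the theorem forces at least one $S_{i_0}$ to contain knots of arbitrarily large $n^{\textrm{th}}$-order genus.

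Fixing such an index $i_0$, every knot in $S_{i_0}$ has slice genus exactly equal to $i_0$, while the $n^{\textrm{th}}$-order genera within $S_{i_0}$ remain unbounded. I would then extract a subsequence of $S_{i_0}$ along which the $n^{\textrm{th}}$-order genus is strictly increasing, which is possible precisely because the values are unbounded. The resulting infinitely many knots all lie in $\G_{n+2}$, all share the common slice genus $i_0$, and have pairwise distinct $n^{\textrm{th}}$-order genera. Since distinct values of this concordance invariant force the knots to be pairwise non-concordant, they are in particular infinitely many distinct knots, which is exactly the assertion of the corollary.

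There is no serious obstacle here, since the hard construction and the lower bounds are all internal to Theorem~\ref{arbhinog}. The only point requiring care is the bookkeeping in the pigeonhole step, namely verifying that restricting to a subfamily of constant slice genus does not destroy the unboundedness of the higher-order genus; this is guaranteed by the finiteness of the range $\{0,1,\dots,g\}$ of slice-genus values.
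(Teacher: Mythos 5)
Your proposal is correct and follows essentially the same route as the paper: invoke Theorem~\ref{arbhinog} to get a family in $\G_{n+2}$ with slice genus bounded by a fixed $g$ but unbounded $n^{\textrm{th}}$-order genera, then pigeonhole on the finitely many possible slice-genus values $\{0,1,\dots,g\}$ to extract an infinite subfamily with constant slice genus and pairwise distinct $n^{\textrm{th}}$-order genera. The paper's proof is just a terser version of this same pigeonhole argument.
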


Murasugi proved~\cite[Theorem 9.1]{kM65} that the ordinary signature of a knot is a lower bound for the slice genus of that knot (henceforth ``Murasugi's inequality'').  Gilmer later proved~\cite[Theorem 1]{pG82} that the sum of certain Casson-Gordon invariants and the ordinary signature bounds the slice genus from below (henceforth ``Gilmer's inequality'').  Cochran, Orr and Teichner first used $L^{2}$-signatures to study knots.  First, we define higher-order analogues of slice genus, and to any $(n)$-solvable knot we assign a set of real numbers, called the $n^{\textrm{th}}$-order signatures.  This begs the question of whether there is a higher-order analogue of Murasugi's inequality.  Our primary tool is the desired higher-order analogue.

\begin{secondaryresult}
	If $K\in\G_{n+2}$, there is an $n^{\textrm{th}}$-order signature of $K$ that gives a lower bound for the $n^{\textrm{th}}$-order genus of $K$.
\end{secondaryresult}

We are not the first to utilize $L^{2}$-signatures in the study of genus-like invariants.  Cha used metabelian $L^{2}$-signatures to obtain new lower bounds on the minimal genus of embedded surfaces representing a given $2$-dimensional homology class in certain $4$-manifolds~\cite{jC08}.  An application of Cha's methods was to find bounds for the slice genus of knots~\cite[Proposition 5.1]{jC08}.  Our Theorem~\ref{sigboundgen} uses the $L^{2}$-signatures to obtain lower bounds for the higher-order genera.  While Cha obtained obstructions to slice genus, we obtain higher-order obstructions to the higher-order genera.  It seems that the only (classical) sliceness obstruction our higher-order genera give is that if one of the higher-order genera of a knot is positive, then that knot cannot be slice.  However, a knot having large higher-order genera does not in general obstruct the knot from having a small (but positive) slice genus.

We should note that our higher-order signatures give a lower bound on the topological higher-order genera and often fail to be accurate in the smooth category.  Consequently, we choose to work in the topological category, except for Section~\ref{smoothcategory}, which contains examples in the smooth category.

\section{Definitions}

We start with the geometric definitions.

\begin{defn}~\cite{FT95}
	A \textbf{grope} is a special pair (2-complex, base circle).  A grope has a \textbf{height} $n\in\frac{1}{2}\mathbb{N}$.  A grope of height $1$ is precisely a compact, oriented surface $\Sigma$ with a single boundary component (the base circle).  For $n\in\N$, a grope of height $n+1$ is defined recursively as follows: let $\{\a_i,\b_{i}: i=1,\ldots,g\}$ be a symplectic basis of curves for $\Sigma$, the first stage of the grope.  Then a grope of height $n+1$ is formed by attaching gropes of height $n$ to each $\a_i$ and $\b_{i}$ along the base circles.
	
	A grope of height $1.5$ is formed by attaching gropes of height $1$ (i.e. surfaces) to a Lagrangian of a symplectic basis of curves for $\Sigma$.  That is, a grope of height $1.5$ is a surface with surfaces glued to ``half'' of the basis curves.  In general, a grope of height $n+1.5$ is obtained by attaching gropes of height $n$ to the $\a_{i}$ and gropes of height $n+1$ to the $\beta_{i}$.

	Given a 4-manifold $W$ with boundary $M$ and a framed circle $\gamma\subset M$, we say that $\gamma$ bounds a \textbf{Grope} in $W$ if $\gamma$ extends to an embedding of a grope with its untwisted framing.  That is, a Grope has a trivial normal bundle, so parallel push-offs can be taken.  Knots in $S^3$ are always equipped with the zero framing.
\end{defn}

The set of all concordance classes of knots that bound Gropes of height $n$ in $D^4$ is denoted $\G_n$, which is a subgroup of $\C$.  We may choose to forget the top stages of a Grope.  Thus, if $K$ bounds a Grope of height $n+1$ in $D^{4}$, $K$ also bounds a Grope of height $n$ in $D^{4}$.  We see that $\G_{n+1}\subset\G_{n}$ as subgroups of $\C$, and this series of subgroups is \textbf{the Grope filtration of the knot concordance group}.  By `$K\in\G_{n}$,' we mean a knot $K$ whose concordance class lies in $\G_{n}$, or equivalently, a knot that bounds a Grope of height $n$ in $D^{4}$.

\begin{defn}
	For $K\in\G_{n+2}$, define the \textbf{$n^{\textrm{th}}$-order genus of $K$} to be the minimum of the genera of the first stage surfaces of Gropes of height $n+2$ in $D^4$ bounded by $K$.  Denote the $n^{\textrm{th}}$-order genus of $K$ by $g_n(K)$.  With this numbering scheme, the slice genus of $K$ is the $-1^{\textrm{st}}$-order genus of $K$.
\end{defn}

It is immediately clear that for $K\in\G_{n+2}$, $0\leq g_{-1}(K)\leq g_0(K)\leq\cdots\leq g_n(K)$, and that $g_n(K)=g_n(J)$ if $K$ and $J$ are concordant.  Also, $K$ is slice if and only if $g_n(K)=0$ for some $n\geq -1$.

Now we turn to the algebraic definitions.  If $G$ is a group, the \textbf{derived series of $G$} is defined recursively by $\Gn{0}=G$ and $\Gn{i+1}=\left[\Gn{i},\Gn{i}\right]$.  The \textbf{rational derived series of $G$} is defined recursively by setting $\Gr{G}{0}=G$ and $\displaystyle\Gr{G}{i+1}=\left\{g\in G: g^k\in\left[\Gr{G}{i},\Gr{G}{i}\right], \textrm{ for some } k>0\right\}$.
\ \\
\begin{defn}~\cite{COT03}
	Let $M$ be closed, orientable 3-manifold.  A spin 4-manifold $W$ with $\del W=M$ is an \textbf{$(n)$-solution for $M$} if the inclusion-induced map $i_\ast:\H{1}{M}\to\H{1}{W}$ is an isomorphism and if there are embedded surfaces $L_i$ and $D_i$ (with product neighborhoods) for $i=1,\ldots,m$ that satisfy the following conditions:
	\begin{enumerate}
		\item the homology classes $\left\{\left[L_1\right],\left[D_1\right],\ldots,\left[L_m\right],\left[D_m\right]\right\}$ form an ordered basis for $\H{2}{W}$,
		\item the intersection form $\big(\H{2}{W},\cdot\big)$ with respect to this ordered basis is a direct sum of hyperbolics,
		\item $L_i\cap D_j$ is empty if $i\neq j$,
		\item for each $i$, $L_i$ and $D_i$ intersect transversely at a point, and
		\item each $L_i$ and $D_i$ are \textbf{$(n)$-surfaces}, i.e. $\p(L_i)\subset\pn{W}{n}$ and $\p(D_i)\subset\pn{W}{n}$.
	\end{enumerate}
	If, in addition, $\p(L_{i})\subset\pn{W}{n+1}$ for each $i$, we say $W$ is an \textbf{$(n.5)$-solution} for $M$.  Since $H_1(W)$ has no 2-torsion and the intersection form of $W$ is even, $W$ is necessarily spin.
	
	If a closed, orientable 3-manifold has an $(n)$-solution, we say $M$ is \textbf{$(n)$-solvable}.  A knot $K$ in $S^3$ is an \textbf{$(n)$-solvable knot} if the zero surgery on $K$ is $(n)$-solvable.
\end{defn}

As in~\cite{COT03}, the set of all $(n)$-solvable knots is denoted $\F_n$, and Cochran-Orr-Teichner showed that the $\F_n$ form a nested series of subgroups of $\C$.  This series of subgroups is \textbf{the $(n)$-solvable filtration of the knot concordance group}.

Given a closed $3$-manifold and a homomorphism $\phi:\p(M)\to \Gamma$ where $\Gamma$ is any group, one can define the von Neumann $\rho$-invariant $\rho(M,\phi)\in\R$~\cite[Section 4]{CG85}.  See~\cite{CT07} for an analytical interpretation of these von Neumann $\rho$-invariants.

\begin{defn}\label{sigdef}
	For $K\in\F_n$, we define the \textbf{$n^{\textrm{th}}$-order signatures of $K$} to be the elements of the set ${\mathfrak S}^n(K)=\left\{\rho(M_K,\phi)\in\R\,| \ \phi:\pi(M_K)\xrightarrow{i_\ast}\pi\twoheadrightarrow
 \pi/\pi^{(n+1)}_r\right\}$ where $\pi=\p(W)$, $W$ is an $(n)$-solution for $M_K$, $i:M_K\to W$ is the inclusion map, and $\rho(M_{K},\phi)$ is the associated von Neumann $\rho$-invariant.  While this set of signatures is an isotopy invariant of $K$, it is not a concordance invariant~\cite[Example 3.2]{pHthesis}.
\end{defn}

Recall the Cheeger-Gromov estimate for the von Neumann $\rho$-invariants of a given closed, orientable 3-manifold~\cite{CG85}.  That is, given a closed, orientable 3-manifold $M$, there is a constant $C_M$ such that
\begin{equation}\label{cgbound}
	\left|\rho(M,\phi)\right|< C_M
\end{equation}
for all homomorphisms $\phi:\p(M)\to \Gamma$ for any group $\Gamma$.  Thus for a fixed knot $K$ and a fixed $n$, the set ${\mathfrak{S}}^{n}(K)$ is a bounded set of real numbers.

\section{Concrete examples in the smooth category}\label{smoothcategory}

In this section we work in the smooth category.  The purpose of this section is to construct non-slice knots that bound Gropes of a fixed height.  We compute the higher-order genera in these examples and conclude that for any positive integers $n$ and $m$, there is a knot whose smooth $n^{\textrm{th}}$-order genus is equal to $m$.  The computations do not make use of our $n^{\textrm{th}}$-order signatures.

Let $K$ denote any knot with non-negative maximal Thurston-Bennequin number.  For example, if $K$ is the right-handed trefoil, then $TB(K)=1$.  Let $D(K)$ denote the positively-clasped, untwisted Whitehead double of $K$ as depicted in Figure~\ref{whpos}.  For $i\geq 1$, let $D^{i}(K)=D(D^{i-1}(K))$ denote the $i^{\textrm{th}}$ iterated Whitehead double of $K$.  By Livingston~\cite{cL04}, we know that $TB(K)\geq0$ implies that the Ozsv\'{a}th-Szab\'{o} $\tau$-invariant of $D^i(K)$ is nontrivial, i.e. $\tau(D^{i}(K))=1$.  It follows from~\cite[Corollary 1.3]{OS03} that $D^{i}(K)$ is not smoothly slice for all $i\geq 1$.  It should be noted that earlier work of Lee Rudolph implies that $D^{i}(K)$ is not slice for all $i\geq 1$ if $K$ is the right-handed trefoil~\cite{lR93}.

\begin{figure}[ht!]
	\begin{center}
		\begin{picture}(142,142)(0,0)
			\put(60, 15){$K$}
			\includegraphics{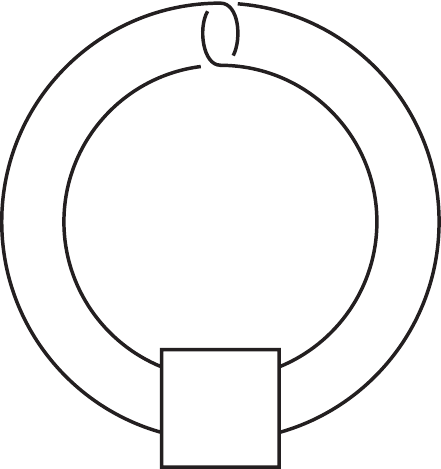}
		\end{picture}
		\caption{$D(K)$: the positively-clasped, untwisted Whitehead double of $K$.}
		\label{whpos}
	\end{center}
\end{figure}

We describe a Grope of height $2$ in $S^{3}\times I$ bounded by $D(K)$.  The standard Seifert surface for $D(K)$ has a symplectic basis of curves, each of which inherits the zero framing from this surface.  This basis is pictured in Figure~\ref{untwistedbasis}.  Let $\a$ denote the basis curve that ``goes over the bridge'' of this Seifert surface, and let $\b$ denote the other curve.  Pull $\a$ slightly out of the page so that the intersection point with $\b$ is removed.  Observe that the link $\a^{+}\cup\b$ is two parallel copies of $K$.  Now push these two curves down in the $I$ direction and glue parallel Seifert surfaces for $K$.  The Seifert surface for $D(K)$ together with the pushing annuli and Seifert surfaces for $K$ comprise a height $2$ Grope for $D(K)$ in $S^{3}\times I$.  The genus of the first stage of this Grope is $1$.  By~\cite[Corollary 1.3]{OS03} $1=\tau(D(K))\leq g_{-1}(D(K))\leq g_{0}(D(K))$ and $g_{0}(D(K))\leq 1$ by construction, we have $g_{0}(D(K))=1$.

\begin{figure}[ht!]
	\begin{center}
		\begin{picture}(142,142)(0,0)
			\put(60, 15){$K$}
			\includegraphics{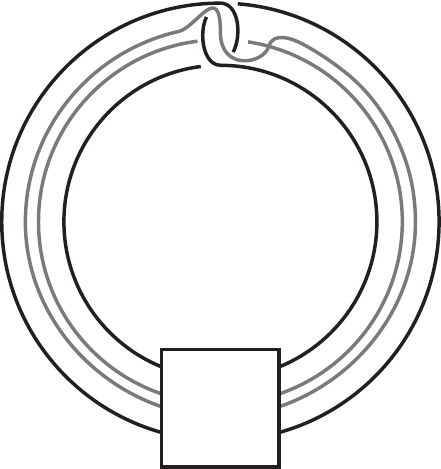}
		\end{picture}
		\caption{A basis of untwisted curves for the Seifert surface of $D(K)$.}
		\label{untwistedbasis}
	\end{center}
\end{figure}

We can iterate this procedure to build a Grope of height $n+1$ in $S^{3}\times I$ bounded by $D^{n}(K)$, and the first stage of this Grope has genus $1$.  As before, we have $1\leq\tau(D^{n}(K))\leq g_{-1}(D^{n}(K)\leq g_{0}(D^{n}(K))\leq\cdots\leq g_{n-1}(D^{n}(K))\leq 1$, whence $g_{n-1}(D^{n}(K))=1$.

Since $\tau:\C\to\Z$ is a homomorphism~\cite[Theorem 1.2]{OS03}, we conclude that $g_{n-1}\left(\#_{m} D^{n}(K)\right)\geq\tau\left(\#_{m} D^{n}(K)\right)=m\cdot\tau\left(D^{n}(K)\right)=m$ and $g_{n-1}\left(\#_{m} D^{n}(K)\right)\leq m$ by construction.  To summarize, we have the following theorem.

\begin{thm}
	For any $n\geq 0$ and $m\geq 1$, there is a knot $K\in\G^{smooth}_{n+2}$ of infinite order, and $g_{n}(K)=m$.
\end{thm}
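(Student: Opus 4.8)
The plan is to exhibit the required knot as an $m$-fold connected sum of a suitable iterated Whitehead double, and then to sandwich its $n^{\textrm{th}}$-order genus between a geometric upper bound and a $\tau$-based lower bound that happen to coincide. Let $J$ denote a knot with $TB(J)\geq 0$, such as the right-handed trefoil (this is the seed knot written $K$ in the construction above; I rename it to avoid clashing with the $K$ in the theorem statement). Set $K=\#_m D^{n+1}(J)$, the knot whose existence the theorem asserts. The iterated doubling construction described above produces, for the single knot $D^{n+1}(J)$, a Grope of height $(n+1)+1=n+2$ in $S^3\times I$ with base circle $D^{n+1}(J)$ and first-stage genus $1$; since this Grope lies in $S^3\times[0,1)$, capping off the $S^3\times\{1\}$ end with a $4$-ball realizes it as a Grope in $D^4$, so $D^{n+1}(J)\in\G^{smooth}_{n+2}$. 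Forming the boundary connected sum of $m$ copies of this Grope yields a height $n+2$ Grope in $D^4$ bounded by $K$, whose first stage is the boundary connected sum of $m$ genus-$1$ surfaces and hence has genus $m$. This shows $K\in\G^{smooth}_{n+2}$ and gives the upper bound $g_n(K)\leq m$.

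Next I would establish the matching lower bound. Since $TB(J)\geq 0$, Livingston~\cite{cL04} gives $\tau(D^{n+1}(J))=1$, and because $\tau\colon\C\to\Z$ is a homomorphism~\cite[Theorem 1.2]{OS03} we obtain $\tau(K)=m\cdot\tau(D^{n+1}(J))=m$. Combining the slice-genus inequality $\tau(K)\leq g_{-1}(K)$ of~\cite[Corollary 1.3]{OS03} with the monotonicity $g_{-1}(K)\leq g_0(K)\leq\cdots\leq g_n(K)$ noted earlier (valid because $K\in\G_{n+2}$) yields $m=\tau(K)\leq g_n(K)$. Together with the upper bound this forces $g_n(K)=m$. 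Infinite order is then immediate: $\tau(\#_r K)=rm$ is unbounded in $r$, so $K$ has infinite order in $\C^{smooth}$ and therefore in the subgroup $\G^{smooth}_{n+2}$.

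The genuinely geometric content, and the step I expect to require the most care, is the iterated Grope construction together with its behavior under connected sum: one must verify that each application of the positively-clasped, untwisted Whitehead doubling raises the Grope height by exactly one while keeping the first-stage genus equal to $1$, and that the boundary connected sum of $m$ such Gropes has first stage of genus exactly $m$ rather than merely at most $m$. All of the gauge-theoretic input — the nontriviality $\tau(D^{i}(J))=1$, the slice-genus bound, and the homomorphism property of $\tau$ — is imported as a black box, so the remaining difficulty is essentially organizational: keeping straight the index shift between the height $n+2$ of the Grope and the order $n$ of the genus it computes.
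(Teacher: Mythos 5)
Your proposal is correct and follows essentially the same route as the paper: the knot is $\#_m D^{n+1}(J)$ for a seed knot $J$ with $TB(J)\geq 0$, the upper bound $g_n\leq m$ comes from the iterated Seifert-surface/pushing-annuli Grope of height $n+2$ with first stage genus $1$ (summed over $m$ copies), and the lower bound comes from $\tau(K)=m\leq g_{-1}(K)\leq g_n(K)$ using Livingston's result and the homomorphism property of $\tau$. Your explicit treatment of the capping-off into $D^4$, the boundary connected sum of Gropes, and the infinite-order claim via $\tau(\#_r K)=rm\neq 0$ only spells out steps the paper leaves implicit.
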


\begin{rem}
	Since the Alexander polynomial of $D(K)$ is trivial, it can be shown that $D(K)$ is smoothly $(n)$-solvable for all $n$.  However, whether $D(K)\in\G^{smooth}_{n+2}$ for all $n$ is still an open question.
\end{rem}

\section{Lower bounds on higher-order genera}

We now turn to our higher-order signatures as tools for estimating the higher-order genera.  While the higher-order signatures are not explicitly computable, we demonstrate how to ensure that all higher-order signatures are large enough to guarantee that the higher-order genera are large.

\begin{lem}\label{sigsboundbetti}
	Let $K\in\F_{n}$ and $W$ be an $(n)$-solution for $M_{K}$.  Then the $n^{\textrm{th}}$-order signature of $K$ associated to $W$ satisfies $\left|\rho\left(M_{K},\phi\right)\right|\leq \b_{2}(W)$.
\end{lem}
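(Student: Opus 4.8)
The plan is to recognize $\rho(M_K,\phi)$ as a signature defect and then bound the two pieces separately. Set $\pi=\p(W)$ and $\Gamma=\pi/\pi^{(n+1)}_r$, so that by the very construction of $\phi$ in Definition~\ref{sigdef} the coefficient system factors as $\p(M_K)\xrightarrow{i_\ast}\pi\xrightarrow{\psi}\Gamma$; that is, the representation on the boundary $M_K$ extends over all of $W$. This is exactly the hypothesis needed to invoke the $L^2$-signature theorem (the Atiyah--Cheeger--Gromov signature-defect formula), which yields
\begin{equation*}
	\rho(M_K,\phi)=\sigma^{(2)}_\Gamma(W)-\sigma(W),
\end{equation*}
where $\sigma^{(2)}_\Gamma(W)$ is the $L^2$-signature of the $\Gamma$-equivariant intersection form of $W$ and $\sigma(W)$ is the ordinary signature. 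The problem thereby reduces to estimating these two signatures.

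The ordinary signature vanishes outright. Since $W$ is an $(n)$-solution, condition (2) in the definition says that, in the ordered basis $\{[L_i],[D_i]\}$, the intersection form on $\H{2}{W}$ is a direct sum of hyperbolic forms. Each hyperbolic summand has eigenvalues $+1$ and $-1$, hence signature $0$, so $\sigma(W)=0$ and we are left with $\rho(M_K,\phi)=\sigma^{(2)}_\Gamma(W)$.

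It remains to bound $|\sigma^{(2)}_\Gamma(W)|$ by $\b_2(W)=2m$, and here I would use the $(n)$-solution structure essentially. The $2m$ embedded surfaces $L_1,D_1,\dots,L_m,D_m$ form a basis of $\H{2}{W}$; lifting them to the $\Gamma$-cover and taking equivariant self-intersections, the $\Gamma$-equivariant intersection form is represented by a $2m\times 2m$ Hermitian matrix $A=\big(\lambda_\Gamma(e_i,e_j)\big)$ over $\Z\Gamma$, where $e_1,\dots,e_{2m}$ are the lifted surface classes. Such a matrix is a self-adjoint $\Gamma$-equivariant operator on the Hilbert $\Gamma$-module $(\ell^2\Gamma)^{2m}$, whose von Neumann dimension is exactly $2m$. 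Its $L^2$-signature is the difference of the von Neumann dimensions of the positive and negative spectral subspaces, and these sum to at most $2m$; therefore
\begin{equation*}
	|\sigma^{(2)}_\Gamma(W)|\le 2m=\b_2(W),
\end{equation*}
which together with $\sigma(W)=0$ is precisely the assertion.

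The step I expect to be the main obstacle is justifying that this $2m\times 2m$ matrix genuinely computes $\sigma^{(2)}_\Gamma(W)$ — equivalently, that the $2m$ lifted surfaces control the \emph{equivariant} intersection form and not merely the integral one. This is where the full force of the $(n)$-solution hypothesis must enter: the surfaces have $\p(L_i),\p(D_i)\subset\pi^{(n)}$, carry product neighborhoods, and pair into hyperbolics, and one needs the fact that $\Gamma=\pi/\pi^{(n+1)}_r$ is poly-torsion-free-abelian (so that $\Z\Gamma$ is an Ore domain embedding in a skew field of fractions and von Neumann dimension agrees with $\mathcal K$-dimension) to make the reduction to a finite Hermitian matrix rigorous. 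By contrast, the signature-defect formula, the vanishing of $\sigma(W)$, and the eigenvalue bound on a $2m\times 2m$ Hermitian operator over $\Z\Gamma$ are all standard once the coefficient system is seen to extend over $W$.
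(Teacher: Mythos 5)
Your first two steps coincide exactly with the paper's proof: since $\phi$ factors through $\p(W)$ by the very definition of the $n^{\textrm{th}}$-order signatures, the $L^{2}$-signature theorem gives $\rho(M_K,\phi)=\sigma^{(2)}\left(W,\Gamma\right)-\sigma(W)$ with $\Gamma=\p(W)/\pr{W}{n+1}$, and condition (2) in the definition of an $(n)$-solution forces $\sigma(W)=0$. The divergence is at the final inequality, where the paper does not argue directly at all: it cites Cha's general result~\cite[Lemma 2.7]{jC08} that $\left|\sigma^{(2)}\left(W,\Gamma\right)\right|\leq\b_{2}(W)$ for a compact $4$-manifold equipped with such a coefficient system --- a fact that makes no use whatsoever of the surfaces $L_i$, $D_i$.

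Your substitute for that citation contains a genuine gap. The claimed $2m\times 2m$ matrix $A=\bigl(\lambda_\Gamma(e_i,e_j)\bigr)$ does not exist: for $[L_i],[D_i]$ to define classes $e_i$ in $\H{2}{W;\Z\Gamma}$, the surfaces must lift to the $\Gamma$-cover of $W$, which requires $\p(L_i),\p(D_i)\subset\ker\left(\p(W)\to\Gamma\right)=\pr{W}{n+1}$. The $(n)$-solution condition gives only $\p(L_i)\subset\pn{W}{n}$; demanding $\p(L_i)\subset\pn{W}{n+1}$ for the $L_i$ alone is precisely the extra condition defining an $(n.5)$-solution, so your reduction assumes structure that is simply not available (and whose absence is the whole point of the filtration). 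Even granting the lifts, the argument would still fail: lifts of an integral basis need not generate $\H{2}{W;\Z\Gamma}$ or the $L^{2}$-homology, the $L^{2}$-signature is that of the form on all of $H_2$ with $\ell^{2}\Gamma$-coefficients, and bounding the signature of the restriction of a Hermitian form to a submodule gives no bound on the signature of the ambient form. Finally, your closing diagnosis is backwards: the inequality $\left|\sigma^{(2)}\right|\leq\b_{2}(W)$ does not require the $(n)$-solution structure at all. Cha's proof is a chain-level argument: because $\Gamma$ is PTFA (hence in Strebel's class $D(\Q)$), ranks of boundary matrices over the Ore skew field $\mathcal{K}$ of $\Z\Gamma$ dominate the ranks of their rational augmentations, whence $\dim_{\mathcal{K}}\H{2}{W;\mathcal{K}}\leq\b_{2}(W)$, and von Neumann dimension agrees with $\mathcal{K}$-dimension. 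Torsion-freeness of $\Gamma$ is essential here --- the inequality fails for finite coefficient groups, e.g. the $\Z/2$ coefficient system on a tubular neighborhood of $\mathbb{RP}^{2}\subset S^{4}$ has $\left|\sigma^{(2)}\right|=\tfrac{1}{2}>0=\b_{2}$ --- but the hyperbolic basis of surfaces plays no role in it.
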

\begin{proof}
	Let $\phi:\p(M_K)\xrightarrow{i_\ast} \p(W)\xrightarrow{\pi} \p(W)/\pr{W}{n+1}$.  By the definition of an $(n)$-solution, the ordinary intersection form of $W$ is a direct sum of hyperbolics, implying that the ordinary signature of $W$ is zero.  Since $\phi$ factors through $\p(W)$, we have that $$\rho(M_K,\phi)=\sigma^{(2)}\left(W,\p(W)/\pr{W}{n+1}\right)-\sigma(W)=\sigma^{(2)}\left(W,\p(W)/\pr{W}{n+1}\right)$$  Here $\sigma^{(2)}(W,--)$ refers to the $L^{2}$-signature of $W$ associated to the quotient $\p(W)/\pr{W}{n+1}$.  We refer the reader to Section 5 of~\cite{COT03} for a thorough explanation of $L^{2}$-signatures.  Cha has shown that $\left|\sigma^{(2)}\left(W,\p(W)/\pr{W}{n+1}\right)\right|\leq\beta_2(W)$~\cite[Lemma 2.7]{jC08}.
\end{proof}  

That the homomorphism $\phi:\p(M_{K})\to\p(W)/\pr{W}{n+1}$ factors through $\p(W)$ of bounding $4$-manifold $W$ is crucial.  Our philosophy differs from Cha's~\cite{jC08} in that we assume our homomorphisms factor through bounding $4$-manifolds (cf. Definition~\ref{sigdef}), whereas Cha takes a homomorphism $\p(M_{K})\to\Gamma$ and tries to extend it to a bounding $4$-manifold.  In particular, Cha finds a homomorphism $\phi_{\sigma}:\p(M_{K})\to\Z$ that factors through a certain bounding $4$-manifold, and the von Neumann $\rho$-invariant associated to this homomorphism satisfies $|\rho(M_{K},\phi_{\sigma})|\leq4\,g_{-1}(K)$, where $g_{-1}(K)$ is the slice genus of $K$~\cite[Theorem 1.1 and Proposition 1.2]{jC08}.  We, however, consider many homomorphisms that we assume extend to bounding $4$-manifolds, and we show that (at least) one of the associated $\rho$-invariants satisfies $|\rho|\leq 4\,g_{n}(K)$, where $g_{n}(K)$ is the $n^{\textrm{th}}$-order genus of $K$.

\begin{thm}\label{sigboundgen}
	If $K\in\G_{n+2}$, one of the $n^{\textrm{th}}$-order signatures $\rho\in\mathfrak{S}^n(K)$ satisfies $|\rho|\leq4\,g_n(K)$.
\end{thm}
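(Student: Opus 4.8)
The plan is to extract from a minimal-genus Grope an explicit $(n)$-solution $W$ for $M_K$ whose second Betti number is at most $4\,g_n(K)$, and then to feed $W$ into Lemma~\ref{sigsboundbetti}. First I would choose a Grope of height $n+2$ in $D^4$ bounded by $K$ whose first-stage surface $\Sigma$ realizes the minimal genus $g=g_n(K)$, and fix a symplectic basis $\{\a_i,\b_i : i=1,\dots,g\}$ of $2g$ curves on $\Sigma$. By the definition of a Grope of height $n+2$, each of these $2g$ basis curves bounds a Grope of height $n+1$ embedded in $D^4$. The surface $\Sigma$ is a slice surface for $K$, and the attached height-$(n+1)$ Gropes are the geometric input that will push the relevant fundamental groups into the derived series of $\p(W)$.

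Next I would build the $(n)$-solution $W$ with $\del W=M_K$ by running the Cochran--Orr--Teichner construction behind the inclusion $\G_{n+2}\subset\F_n$ \cite[Theorem 8.11]{COT03}, while keeping track of homology. Starting from the exterior of $\Sigma$ (equivalently, a suitable thickening of $M_K$), one surgers and caps using the $2g$ basis curves: each basis curve $\a_i$ or $\b_i$ gives rise to a single hyperbolic summand of the intersection form, with one generator $L_j$ represented by the surface coming from the Grope attached to that curve, and the dual generator $D_j$ represented by a geometrically dual torus (or sphere) produced by the surgery. Thus $H_2(W)$ is free of rank $2\cdot(2g)=4g$, so $\beta_2(W)\le 4g$, and the intersection form is a direct sum of $2g$ hyperbolics. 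One also checks that $i_\ast\colon H_1(M_K)\to H_1(W)$ is an isomorphism of infinite cyclic groups, since the basis curves are null-homologous in the exterior and the surgeries preserve the meridian.

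The essential point, and the step I expect to be the main obstacle, is verifying the fifth condition in the definition of an $(n)$-solution, namely that each $L_j$ and $D_j$ is an $(n)$-surface, $\p(L_j)\subset\pn{W}{n}$ and $\p(D_j)\subset\pn{W}{n}$. This is exactly where the height of the Grope enters and where the bookkeeping is delicate: because each basis curve bounds a Grope of height $n+1$, the standard correspondence between Grope height and depth in the derived series \cite[Section 8]{COT03} shows that the surfaces built from these sub-Gropes have fundamental group mapping into the $n^{\textrm{th}}$ derived subgroup of $\p(W)$. The difficulty is to carry the higher stages of the Grope through the surgeries without disturbing the homological count of the previous paragraph. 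Note that the height of the Grope affects only this derived-series condition, not $\beta_2(W)$; this is why the Betti bound $4g$ agrees with Cha's slice-genus estimate \cite{jC08} and is independent of $n$.

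Finally, let $\phi\colon\p(M_K)\xrightarrow{i_\ast}\p(W)\twoheadrightarrow\p(W)/\pr{W}{n+1}$ be the homomorphism associated to $W$ as in Definition~\ref{sigdef}. Since $W$ is an $(n)$-solution for $M_K$, the value $\rho(M_K,\phi)$ is by definition one of the $n^{\textrm{th}}$-order signatures in $\mathfrak{S}^n(K)$. Applying Lemma~\ref{sigsboundbetti} to $W$ then yields $|\rho(M_K,\phi)|\le\beta_2(W)\le 4g=4\,g_n(K)$, which is the asserted bound for this particular $\rho\in\mathfrak{S}^n(K)$.
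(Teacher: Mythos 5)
Your proposal is correct and follows essentially the same route as the paper: take a height-$(n+2)$ Grope whose first stage $\Sigma$ realizes $g_n(K)$, invoke the Cochran--Orr--Teichner construction of \cite[Theorem 8.11]{COT03} to produce an $(n)$-solution $W$ by surgering $\Sigma$ with $\beta_2(W)=4\,g(\Sigma)$, and then apply Lemma~\ref{sigsboundbetti} to the associated signature. The paper simply cites \cite[Theorem 8.11]{COT03} for the construction and the Betti number count, whereas you sketch those internal details; the logical skeleton is identical.
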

\begin{proof}
	Let $\Sigma$ be the first stage of a Grope of height $n+2$ that realizes $g_n(K)$, i.e. $g(\Sigma)=g_{n}(K)$.  Cochran-Orr-Teichner construct an $(n)$-solution $W$ by surgering $\Sigma$, and $\beta_2(W)=4\,g(\Sigma)=4\,g_n(K)$~\cite[Theorem 8.11]{COT03}.  The conclusion follows from Lemma~\ref{sigsboundbetti}.
\end{proof}

\begin{rem}
	Theorem~\ref{sigboundgen} may be thought of as a higher-order analogue of Murasugi's inequality~\cite[Theorem 9.1]{kM65}.  Unlike the subsequent inequalities of Gilmer~\cite[Theorem 1]{pG82} and Cha~\cite[Proposition 5.1]{jC08}, our result gives higher-order obstructions to the higher-order genera.
\end{rem}

\begin{cor}
	If $K$ is a slice knot, then for any $n$, one of the $n^{\textrm{th}}$-order signatures of $K$ vanishes.
\end{cor}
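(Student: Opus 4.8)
The plan is to derive this immediately from Theorem~\ref{sigboundgen}, once we verify that a slice knot realizes vanishing $n^{\textrm{th}}$-order genus for every $n$. The only genuine content is the geometric observation that a slice disk qualifies as a Grope of every height; everything else is a formal application of the bound already established.

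First I would recall that, by definition, a slice knot $K$ bounds a (topologically flat) embedded disk $D$ in $D^{4}$. The key point is that $D$ has genus $0$, so its symplectic basis of curves for the first stage is empty. Inspecting the recursive definition of a grope, attaching gropes of height $n+1$ to each of the (nonexistent) basis curves changes nothing, so $D$ vacuously satisfies the definition of a grope of height $n+2$ for every $n$. Consequently $K\in\G_{n+2}$ for all $n$, and since $D$ serves as a Grope of height $n+2$ whose first stage is the genus-$0$ surface $D$, the minimum defining the $n^{\textrm{th}}$-order genus is $0$; that is, $g_{n}(K)=0$ for every $n$. (This also matches the earlier remark that $K$ is slice if and only if $g_{n}(K)=0$ for some $n\geq-1$.)

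Next I would apply Theorem~\ref{sigboundgen} directly. Since $K\in\G_{n+2}$, that theorem produces an $n^{\textrm{th}}$-order signature $\rho\in\mathfrak{S}^{n}(K)$ satisfying $|\rho|\leq 4\,g_{n}(K)$. Substituting $g_{n}(K)=0$ forces $|\rho|\leq 0$, hence $\rho=0$. Thus one of the $n^{\textrm{th}}$-order signatures of $K$ vanishes, which is exactly the claim.

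The hard part, if any, is simply making the height-counting observation precise: one must be comfortable asserting that a genus-$0$ surface is a grope of arbitrary height because the set of basis curves to which higher stages attach is empty. Once this is granted, no calculation is required, and the corollary falls out of Theorem~\ref{sigboundgen} with no further input.
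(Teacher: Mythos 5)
Your proposal is correct and is exactly the argument the paper intends (the corollary is stated without proof precisely because it follows immediately from Theorem~\ref{sigboundgen}): a slice disk is vacuously a Grope of every height with genus-$0$ first stage, so $K\in\G_{n+2}$ and $g_n(K)=0$ for all $n$, and the bound $|\rho|\leq 4\,g_n(K)=0$ forces a vanishing signature. Your care in noting that the empty symplectic basis makes the recursion vacuous is the right way to make the height-counting precise.
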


\begin{prop}\label{n.5obstruction}
	Suppose $K$ is $(n)$-solvable.  If $K$ is $(n.5)$-solvable, then one of the $n^{\textrm{th}}$-order signatures of $K$ vanishes.
\end{prop}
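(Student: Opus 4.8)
The plan is to exhibit a single vanishing element of $\mathfrak{S}^n(K)$ by choosing a particularly good $(n)$-solution and pushing the computation in Lemma~\ref{sigsboundbetti} one step further. Since $K$ is $(n.5)$-solvable, I would fix an $(n.5)$-solution $W$ for $M_K$. Every $(n.5)$-solution is by definition also an $(n)$-solution, so $W$ determines a homomorphism $\phi:\p(M_K)\xrightarrow{i_\ast}\pi\twoheadrightarrow\Gamma$, where $\pi=\p(W)$ and $\Gamma=\pi/\pr{W}{n+1}$, and the associated $\rho(M_K,\phi)$ is a genuine element of $\mathfrak{S}^n(K)$. It then suffices to show that this particular $\rho$-invariant is zero.

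Exactly as in the proof of Lemma~\ref{sigsboundbetti}, $\phi$ factors through $\pi$ and the ordinary signature of $W$ vanishes, so
\[
\rho(M_K,\phi)=\sigma^{(2)}\left(W,\Gamma\right).
\]
I am thus reduced to proving that the $L^2$-signature of $W$ with respect to $\Gamma=\pi/\pr{W}{n+1}$ is zero. The point of using the \emph{rational} derived series is that each successive quotient $\pr{W}{i}/\pr{W}{i+1}$ is torsion-free abelian, so $\Gamma$ is poly-torsion-free-abelian; hence $\Z\Gamma$ is an Ore domain embedding in its skew field of fractions $\mathcal{K}$, and the $\mathcal{K}$-coefficient intersection form on $H_2(W;\mathcal{K})$ is defined and nonsingular.

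The key step is to produce a Lagrangian for this twisted form. The extra condition defining an $(n.5)$-solution is that $\p(L_i)\subset\pn{W}{n+1}$ for each $i$. Since the ordinary derived series is contained in the rational derived series, $\pn{W}{n+1}\subset\pr{W}{n+1}=\ker(\pi\twoheadrightarrow\Gamma)$, so each $L_i$ has trivial image in $\Gamma$ and therefore lifts to the $\Gamma$-cover of $W$. I would then invoke the computation of Cochran-Orr-Teichner~\cite{COT03}: using only this triviality of the image together with the hyperbolic structure of the ordinary intersection form (condition (2) in the definition of a solution), the classes $[L_1],\ldots,[L_m]$ span a half-rank summand of $H_2(W;\mathcal{K})$ on which the $\mathcal{K}$-valued intersection form vanishes identically. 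Such a submodule is a Lagrangian, so the twisted intersection form is metabolic, hence trivial in the $L^2$-Witt group. Because the $L^2$-signature is a Witt-class invariant that vanishes on metabolic forms, we obtain $\sigma^{(2)}(W,\Gamma)=0$ and thus $\rho(M_K,\phi)=0$.

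The main obstacle is the twisted-coefficient Lagrangian computation just cited: one must check that passing from the ordinary form (where $\{[L_i]\}$ is isotropic by condition (2)) to the $\mathcal{K}$-twisted form preserves the vanishing of all mutual and self intersections, which is precisely where the lift to the $\Gamma$-cover is used, and that the $[L_i]$ remain $\mathcal{K}$-linearly independent — this last point following from the dual pairing $\langle[L_i],[D_i]\rangle$ being a unit of $\mathcal{K}$. These are the technical heart of the $(n.5)$-solvability obstruction in~\cite{COT03}; the only new feature in the present setting is that $\phi$ is assumed at the outset to factor through the bounding $4$-manifold $W$ (cf. Definition~\ref{sigdef}), which is exactly what allows us to conclude the vanishing of an element of $\mathfrak{S}^n(K)$ rather than of an abstractly defined $\rho$-invariant.
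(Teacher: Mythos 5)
Your proposal is correct and takes essentially the same route as the paper: both arguments single out the $(n.5)$-solution $W$ itself, observe it is in particular an $(n)$-solution so that $\rho(M_K,\phi)\in\mathfrak{S}^n(K)$, and conclude that this particular signature vanishes. The only difference is one of granularity: the paper simply cites~\cite[Theorem 4.2]{COT03} as a black box, whereas you unwind that theorem's proof (factoring through $W$, the PTFA/Ore localization, and the Lagrangian spanned by the lifted $L_i$) while still deferring its technical core to~\cite{COT03}.
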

\begin{proof}
	Let $W$ be an $(n.5)$-solution for $K$. It follows from~\cite[Theorem 4.2]{COT03} that the $n^{\textrm{th}}$-order signature of $K$ associated to $W$ vanishes.
\end{proof}

\begin{rem}
	The conclusion holds even if $K$ is assumed to be merely rationally $(n.5)$-solvable~\cite[Definition 4.1]{COT03}.
\end{rem}

If the Alexander polynomial of a knot is trivial, then the knot is topologically slice~\cite{FQ90}.  In particular, Alexander polynomial one knots are $(n)$-solvable for all $n$.  Consequently, the $n^{\textrm{th}}$-order signatures of an Alexander polynomial one knot are all equal to the classical signature, namely zero.  As the $n^{\textrm{th}}$-order signatures are topological invariants, they will not give accurate bounds for the smooth higher-order genera.  For example, the knots constructed in Section~\ref{smoothcategory} had trivial Alexander polynomial but large smooth $n^{\textrm{th}}$-order genera.

\begin{thm}\label{arbhinog}
	For any $n\geq 1$, there is a fixed $g$ and a knot in $\G_{n+2}$ with slice genus bounded above $g$ and arbitrarily high $n^{\textrm{th}}$-order genus.  Furthermore, this knot has infinite order in $\G_{n+2}/\F_{n.5}$.
\end{thm}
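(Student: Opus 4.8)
The plan is to produce, for each fixed $n\ge 1$, a one-parameter family of knots $\{K_m\}_{m\ge 1}\subset\G_{n+2}$ of Seifert genus a fixed constant $g$ (so that $g_{-1}(K_m)\le g$ for every $m$) all of whose $n^{\textrm{th}}$-order signatures are large, and then to invoke Theorem~\ref{sigboundgen}. The key observation is that Theorem~\ref{sigboundgen} only asserts that \emph{some} $\rho\in\mathfrak{S}^n(K)$ obeys $|\rho|\le 4\,g_n(K)$; to convert this into a lower bound on $g_n$ I must arrange that \emph{every} element of $\mathfrak{S}^n(K_m)$ has absolute value at least a constant $C_m$, for then the distinguished signature furnished by the theorem satisfies $C_m\le|\rho|\le 4\,g_n(K_m)$, whence $g_n(K_m)\ge C_m/4$. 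Choosing the family so that $C_m\to\infty$ delivers arbitrarily high $n^{\textrm{th}}$-order genus against a fixed slice-genus bound.

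First I would build the $K_m$ by the iterated genetic-infection (satellite) construction of Cochran–Harvey–Leidy~\cite{CHL07b}. Fix a slice seed $R$ of Seifert genus one whose Seifert surface carries a derivative $\{\eta_1,\eta_2\}$ of curves lying suitably deep in the derived series of $\p(M_R)$, and iterate the associated winding-number-zero doubling operator $n$ times, using as bottom-level companion the knot $\#_m T$, where $T$ is the right-handed trefoil; then the integral Tristram–Levine signature $\rho^{(0)}(\#_m T)=m\,\rho^{(0)}(T)$ grows linearly in $m$. The decisive point is Schubert's genus formula: a winding-number-zero satellite has Seifert genus depending only on its pattern, not its companion. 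Since every stage of the iteration is a satellite with pattern determined by $(R,\eta_i)$, every $K_m$ has Seifert genus $g=1$, so $g_{-1}(K_m)\le 1$ for all $m$, even though the companions $\#_m T$ have arbitrarily large slice genus. That the $K_m$ lie in $\G_{n+2}$ follows from the grope-theoretic description of this construction together with~\cite[Theorem 8.11]{COT03}.

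The heart of the argument is the \emph{uniform} lower bound on the higher-order signatures: I must show that for every $(n)$-solution $W$ for $M_{K_m}$ and every representation $\phi:\p(M_{K_m})\to\p(W)/\pr{W}{n+1}$ of the type in Definition~\ref{sigdef}, the value $\rho(M_{K_m},\phi)$ is bounded below in absolute value by $C_m=m\,|\rho^{(0)}(T)|-\varepsilon$ for a fixed error $\varepsilon$. This is precisely where the higher-order Blanchfield-duality and noncommutative-localization machinery of~\cite{CHL07b} is needed: one argues that any such $\phi$ must be nontrivial on the deepest infection curve, so that the $\rho$-invariant detects the bottom-level companion and records its integral signature $m\,\rho^{(0)}(T)$ up to a controlled correction. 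I expect this to be the main obstacle, since it must govern \emph{all} admissible $(n)$-solutions and representations at once rather than a single convenient one; note that the abelian Cheeger–Gromov and Cha estimates underlying Lemma~\ref{sigsboundbetti} only bound $\rho$ from above and carry no information about such a uniform lower bound.

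Finally, the infinite-order statement would follow from Proposition~\ref{n.5obstruction}. If $\#_j K_m$ were $(n.5)$-solvable for some $j$, one of its $n^{\textrm{th}}$-order signatures would vanish; but $\rho$-invariants are additive under connected sum, so the signatures of $\#_j K_m$ are approximately $j$ times those of $K_m$ and in particular never vanish. Hence $\#_j K_m\notin\F_{n.5}$ for every $j\ge 1$, so $K_m$ has infinite order in $\G_{n+2}/\F_{n.5}$. Taking $m$ large enough that $C_m/4$ exceeds any prescribed value then completes the proof.
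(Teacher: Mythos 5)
Your high-level strategy is the same as the paper's: arrange that \emph{every} element of $\mathfrak{S}^n(K_m)$ is large in absolute value while the Seifert genus stays bounded, then apply Theorem~\ref{sigboundgen} to force $g_n(K_m)$ up, and use Proposition~\ref{n.5obstruction} to obstruct $(n.5)$-solvability; you also correctly isolated the quantifier issue (all signatures, not just one, must be bounded below) that makes this work. But two steps of your execution have genuine gaps. The first is membership in $\G_{n+2}$. You cite~\cite[Theorem 8.11]{COT03}, which gives $\G_{n+2}\subset\F_n$ --- the wrong direction: solvability never implies Grope membership. What is actually needed is the Cochran--Teichner Grope construction~\cite[Theorem 3.8]{CT07}, and its hypotheses fail for your ingredients: the infection curves must bound \emph{disjoint} height-one Gropes in the seed knot's complement, whereas the Cochran--Harvey--Leidy curves $\alpha,\beta$ bound intersecting punctured tori (this is exactly why the paper's final example replaces them by homotopic curves $\alpha',\beta'$ via~\cite[Lemma 3.9]{CT07}), and the bottom-level companion must itself bound a height-two Grope in $D^4$, hence have Arf invariant zero --- but the right-handed trefoil has Arf invariant one, so $\#_m T$ fails for odd $m$. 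The paper sidesteps both problems by infecting the genus-two ribbon knot $R$ by the Cochran--Teichner knot $J_m=\#_m J$, which does bound a height-two Grope and has $\rho_0(J_m)=4m/3$.

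The second gap is the step you yourself call the main obstacle: the uniform lower bound $\left|\rho(M_{K_m},\phi)\right|\ge C_m$ over all $(n)$-solutions $W$ and all induced representations $\phi$. Deferring this to ``the machinery of~\cite{CHL07b}'' is not a proof; for an $n$-fold iterated infection this is precisely where all of the work lies, and it is harder than for a single infection because nontriviality must be propagated down through every level of the iteration. The paper avoids iteration altogether: by~\cite[Theorem 4.3]{CT07} there are curves $\eta_i\in\pn{M_R}{n}$ such that for \emph{any} $(n)$-solution $V$ of $M_R$, some $\eta_k$ survives in $\p(V)/\pr{V}{n+1}$; then, given an arbitrary $(n)$-solution $W$ for $M_K$, the proof builds an explicit cobordism $E$, caps it off with $(0)$-solutions for $J_m$ and with $-W$ to obtain an $(n)$-solution $V$ for $M_R$, pulls the surviving $\eta_k$ back into $W$, and invokes~\cite[Proposition 4.4]{CT07} to write $\rho(M_K,\phi)=\rho(M_R,\phi_R)+\sum_i\epsilon_i\,\rho_0(J_m)$ with $\epsilon_k=1$ and with $\left|\rho(M_R,\phi_R)\right|$ bounded by the Cheeger--Gromov constant $C_{M_R}$, independent of $m$ and of $W$. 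Some mechanism of this kind --- a statement quantified over \emph{all} solutions of the seed knot, together with a way of transporting an arbitrary solution of the infected knot to a solution of the seed --- is what your sketch is missing; the same remark applies to your appeal to ``additivity'' of $\rho$-invariants for the infinite-order claim, which the paper instead obtains by citing~\cite[Theorem 4.2]{CT07}.
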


\begin{rem}
	The statement of Theorem~\ref{arbhinog} seems to be false for $n= 0$.  For example, if $K\in\G_{2}$, one can construct a Grope of height $2$ bounded by $K$ whose first stage has genus equal to the Seifert genus of $K$.  See~\cite[Remark 8.14]{COT03} for a discussion.
\end{rem}

\begin{proof}
	We construct knots according to Cochran-Orr-Teichner~\cite{COT03} and Cochran-Teichner~\cite{CT07}.  We borrow the knot $J$ from~\cite[Figure 3.6]{CT07}.  Let $J_m=\#_{m} J$; then $J_m$ bounds a Grope of height $2$ (and is $(0)$-solvable), and $\rho_0(J_m)=\frac{4m}{3}$~\cite[Lemma 4.5]{CT07}.  Let $R$ denote the knot pictured in Figure~\ref{ribbon} (ignore the curve $\eta$ for now).  $R$ is a fibered, genus $2$, ribbon knot~\cite[p. 447]{COT03}.

\begin{figure}[h!]
	\begin{center}
		\begin{picture}(300, 150)(0, 0)
			\includegraphics[bb=0 0 260 149]{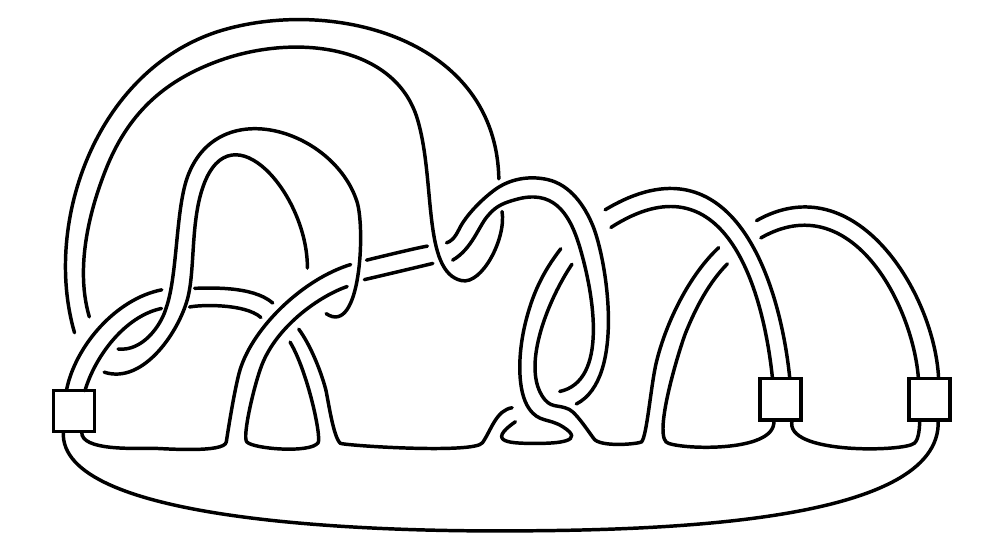}
			\put(-243, 36){\small -1}
			\put(-39, 39){\small -2}
			\put(2, 39){\small +1}
			\put(-9, 7){\small $R$}
			\put(-190, 127){\small $\eta$}
		\end{picture}
		\caption{The ribbon knot $R$ and a curve $\eta\in\pn{S^3-R}{2}$.}
		\label{ribbon}
	\end{center}
\end{figure}

By~\cite[Theorem 4.3]{CT07}, there is a collection of unknotted curves $\eta_i$, $1\leq i\leq j$, in $S^3-R$ with $[\eta_i]\in\pn{M_R}{n}$ and for any $(n)$-solution $V$ of $M_R$, some $i_\ast\left(\left[\eta_k\right]\right)\not\in\pr{V}{n+1}$.  For example, Figure~\ref{ribbon} shows an unknotted curve $\eta$ whose homotopy class lies in $\pn{S^3-R}{2}\cong\pn{M_R}{2}$, and this curve never maps into $\pr{V}{3}$ for any $(2)$-solution $V$ for $M_R$~\cite[Theorem 4.2]{COT03}.  Let $K=K_m$ denote the knot obtained by infecting $R$ by $J_m$ along $\eta_i$ (for each $i$).

Infecting $R$ by $J_{m}$ along $\eta_{i}$ means to grab the strands of $R$ passing through the unknotted curve $\eta_{i}$ and tie them collectively into the knot $J_{m}$.  Below is a schematic diagram of the infection operation.

\begin{figure}[ht!]
	\begin{center}
		\begin{picture}(140, 90) (0,0)
			\put(-8, 40){$\eta_{i}$}
			\put(47, 60){$R$}
			\put(62, 37){\vector(1,0){20}}
			\includegraphics{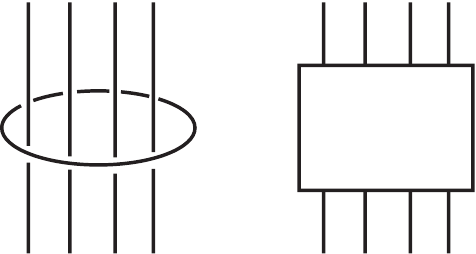}
			\put(-30, 34){$J_{m}$}
		\end{picture}
		\caption{Infecting $R$ by $J_{m}$ along $\eta_{i}$}
		\label{infection}
	\end{center}
\end{figure}

We claim that by choosing $m$ sufficiently large, we can guarantee all $\rho\in\mathfrak{S}^n(K)$ are arbitrarily large. It will follow from Theorem~\ref{sigboundgen} that $K$ will have arbitrarily large $n^{\textrm{th}}$-order genus, modulo verifying $K$ bounds a Grope of height $n+2$, which Cochran and Teichner proved in~\cite[Theorem 3.8]{CT07}.

Since the $\eta_i$ have linking number zero with $R$, we can take a Seifert surface for $R$ and tube around the $\eta_i$ so that the tubes are disjoint. We are left with a Seifert surface for $R$ which the $\eta_i$ do not intersect.  The knot $K$ will have genus bounded above by the genus of our tubed surface for $R$.  We now explain how to increase the $n^{\textrm{th}}$-order genus of $K$ without increasing the genus.

Since our $J_m$ are $(0)$-solvable, let $W_m$ denote a $(0)$-solution for $J_m$.  We form a 4-manifold $E$ from $$ M_R\times[0,1]\bigsqcup_{i=1}^j -M_{J_m}\times[0,1]$$ by identifying, for each $i$, the copy of $\eta_i\times D^2$ in $M_R\times\{1\}$ with the tubular neighborhood of $J_m$ in $M_{J_m}\times\{0\}$ as in Figure~\ref{buildsolution}.  The dashed arcs represent the solid tori $\eta_i\times D^2$.  As indicated in Figure~\ref{buildsolution}, $\del E=M_R\sqcup -M_K\sqcup M_{J_m}\sqcup\cdots\sqcup M_{J_m}$.  We form another 4-manifold $C$ from $E$ by gluing a copy of $W_m$ to each $M_{J_m}\subset\del E$.

Now let $W$ be any $(n)$-solution for $M_K$.  Let $V=C\cup_{-M_K} -W$ so that $\del V=M_R$.  Then $V$ is an $(n)$-solution for $M_R$~\cite[Proof of Theorem 4.2]{CT07}.  From our previous discussion, there is a $\eta_k$ with $i_\ast\left(\left[\eta_k\right]\right)\not\in\pr{V}{n+1}$.  Since $\eta_k$ lives in $M_K$, we may include $\eta_k$ into $W$.  Since $W\subset V$, $i_\ast\left(\left[\eta_k\right]\right)\not\in\pr{W}{n+1}$.

\begin{figure}[h!]
	\begin{center}
		\begin{picture}(220, 140)(0,0)
			\includegraphics[bb=0 0 224 138, scale=1]{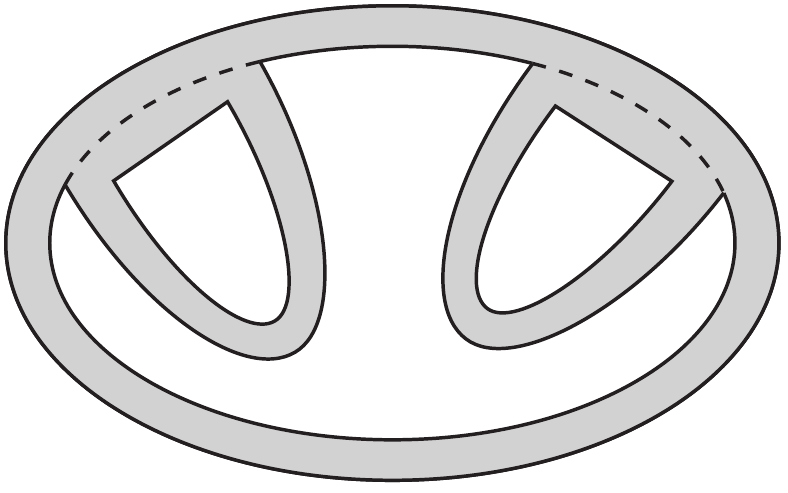}
			\put(-120, 70){$\cdots$}
			\put(-27, 20){\small $M_R$}
			\put(-62, 35){\small $-M_K$}
			\put(-180, 74){\small $M_{J_m}$}
			\put(-65, 74){\small $M_{J_m}$}
		\end{picture}
		\caption{The 4-manifold $E$.}
		\label{buildsolution}
	\end{center}
\end{figure}

Consider the homomorphism $\phi:\p(M_K)\xrightarrow{i_\ast}\p(W)\xrightarrow{\pi}\p(W)/\pr{W}{n+1}$.  Let $\Gamma=\p(W)/\pr{W}{n+1}$.  Now $M_R-\left(\sqcup \eta_i\right)\subset M_K$, so $\phi$ induces a homomorphism $\phi':\p(M_R-\left(\sqcup\eta_i\right))\to\Gamma$.  Since $M_R$ is obtained by $M_R-\left(\sqcup\eta_i\right)$ by adding $j$ 2-cells along the meridians of the $\eta_i$ and then by adding $j$ 3-cells, this $\phi'$ will extend to a homomorphism $\phi_R:\p(M_R)\to\Gamma$ if the meridians of the $\eta_i$ die under $\phi$.  Now $\eta_i\in\pn{M_R}{n}$ and $\Gamma^{(n+1)}=1$, so~\cite[Theorem 8.1]{tC04} implies that $\eta_i\in\pn{M_K}{n}$.  Since the meridian $\mu_i$ of each $J_m$ is identified with the longitude of $\eta_i$, $\mu_i\in\pn{M_K}{n}$.  Thus $\phi(\mu_i)\in\Gamma^{(n)}$.  Since $\mu_i$ generates $\p(S^3-J_m)/\pn{S^3-J_m}{1}$, we see $\phi\left(\pn{S^3-J_m}{1}\right)\subset\Gamma^{(n+1)}=1$.  In particular the meridian of each $\eta_i$ dies under $\phi$, and hence $\phi'$ extends to a map $\phi_R:\p(M_R)\to\Gamma$.

By~\cite[Proposition 4.4]{CT07}, the $\rho$-invariants of $M_K$ and $M_R$ are related by $$\rho(M_K,\phi)-\rho(M_R,\phi_R)=\sum_{i=1}^j \epsilon_i \rho_0(J_m)$$ where $\epsilon_i=0$ or $1$ according to whether $\phi_R\left(\left[\eta_i\right]\right)=1$ or not.  We argued that previously that $i_\ast\left(\left[\eta_k\right]\right)\not\in\pr{W}{n+1}$, so $\phi_R\left(\left[\eta_i\right]\right)\neq1$.  Recall that the set of $\rho$-invariants of $M_R$ are bounded above by the Cheeger-Gromov constant $C_{M_R}$ (cf. equation~\ref{cgbound}).  Thus, by choosing $m$ sufficiently large, we will obtain a knot $K$ with $\left |\rho(M_K,\phi)\right|>B$ for some large constant $B$.  Since $W$ was an arbitrary $(n)$-solution for $K$, we have proved that every $n^{\textrm{th}}$-order signature for $K$ is larger than $B$.  Appealing to Theorem~\ref{sigboundgen}, we see that $g_n(K)$ is arbitrarily large.  We should note here that since $0\not\in\mathfrak S^{n}(K)$, $K\not\in\mathcal{F}_{n.5}$ (by Proposition~\ref{n.5obstruction}).  \cite[Theorem 4.2]{CT07} establishes that $K$ has infinite order in $\G_{n+2}/\F_{n.5}$.
\end{proof}

\begin{cor}\label{nogisbetterthanslicegenus}
	Given any $n\geq 1$, there exist infinitely many knots in $\G_{n+2}$ whose slice genus agree but whose $n^{\textrm{th}}$-order genera are distinct.
\end{cor}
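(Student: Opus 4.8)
The plan is to leverage Theorem~\ref{arbhinog} directly, since it already manufactures a family of knots with controlled slice genus and unbounded $n^{\textrm{th}}$-order genus. First I would recall that the construction in Theorem~\ref{arbhinog} produces, for each sufficiently large $m$, a knot $K_m = K_m(R, J_m)$ obtained by infecting the fixed ribbon knot $R$ by $J_m = \#_m J$ along the fixed curves $\eta_i$. The crucial observation is that the ambient surface controlling the slice genus never changes with $m$: as the proof notes, one takes a fixed Seifert surface for $R$ and tubes around the $\eta_i$ (which have linking number zero with $R$), and the genus of this tubed surface bounds $g_{-1}(K_m)$ from above \emph{independently of $m$}. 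Since all the infections happen along curves disjoint from this surface, the upper bound on the slice genus is a fixed constant $g$ for the entire family.

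Next I would extract the arbitrarily large $n^{\textrm{th}}$-order genera from the same construction. Theorem~\ref{arbhinog} guarantees that by choosing $m$ large we drive every $\rho \in \mathfrak{S}^n(K_m)$ past any prescribed bound $B$, and then Theorem~\ref{sigboundgen} gives $|\rho| \leq 4\,g_n(K_m)$, so $g_n(K_m) \geq B/4$ grows without bound in $m$. This produces a sequence $m_1 < m_2 < \cdots$ for which the values $g_n(K_{m_1}) < g_n(K_{m_2}) < \cdots$ are strictly increasing, hence pairwise distinct. Since the slice genera $g_{-1}(K_{m_j})$ all lie in the fixed finite range $\{0,1,\ldots,g\}$, the pigeonhole principle yields an infinite subsequence on which the slice genus is constant while the $n^{\textrm{th}}$-order genera remain distinct. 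This constant-slice-genus subfamily is exactly the infinite collection the corollary demands.

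The step I expect to require the most care is confirming that the slice-genus upper bound is genuinely uniform in $m$ and that the knots $K_{m_j}$ are genuinely distinct (not merely distinct as constructed, but distinct in $\C$). The distinctness is immediate from the $n^{\textrm{th}}$-order genera being different, since $g_n$ is a concordance invariant; two knots with distinct $g_n$ cannot be concordant, so we truly obtain infinitely many concordance classes. For the uniform slice-genus bound, I would emphasize that the tubing construction depends only on $R$ and the geometric placement of the $\eta_i$, neither of which varies with $m$—the knot $J_m$ is tied into strands passing through a small solid torus and contributes nothing to the genus of the tubed Seifert surface. Thus I would state the corollary's conclusion as a clean consequence: infinitely many knots $K_{m_j} \in \G_{n+2}$ with a common slice genus but pairwise distinct $n^{\textrm{th}}$-order genera, exhibiting $g_n$ as a strict refinement of $g_{-1}$.
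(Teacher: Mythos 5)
Your proposal is correct and follows essentially the same route as the paper: invoke Theorem~\ref{arbhinog} to obtain a family in $\G_{n+2}$ with a uniform slice genus bound $g$ and strictly increasing $n^{\textrm{th}}$-order genera, then apply the pigeonhole principle to the finitely many possible slice genus values in $\{0,\ldots,g\}$ to extract an infinite subfamily with constant slice genus and pairwise distinct $g_n$. The only difference is that you re-derive details internal to Theorem~\ref{arbhinog} (the $m$-independence of the tubed Seifert surface bound), which the paper simply absorbs into the theorem's statement.
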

\begin{proof}
	By Theorem~\ref{arbhinog}, there is a positive integer $g$ and a sequence $\{K_{i}\}_{i=1}^{\infty}$ of knots in $\G_{n+2}$ with $g_{-1}(K_{i})\leq g$ and $g_{n}(K_{i})<g_{n}(K_{i+1})$ for all $i\geq 1$.  Since the set $\{g_{-1}(K_{i})\}$ is a finite set, we can pass to a subsequence of knots with the same slice genera but different $n^{\textrm{th}}$-order genera.
\end{proof}

\begin{rem}
	We can improve the statement of Corollary~\ref{nogisbetterthanslicegenus} to say that for each $n\geq 2$, there are infinitely many knots in $\G_{n+2}$ with identical $i^{\textrm{th}}$-order genera for $i\leq n-1$ and distinct $n^{\textrm{th}}$-order genera.  However, the proof is too lengthy to include in this paper.  We refer the reader to the author's thesis for a proof~\cite[Theorem 5.4]{pHthesis}.  This result implies that the lower-order genera of knots are inadequate measures of the complexity of $\G_{n+2}$ and that the higher-order genera capture some of the missed information.  Examples of this phenomenon can be constructed by infection on the $9_{46}$ knot as in~\cite{pH08}.
\end{rem}

\begin{exam}
	We provide a concrete family of examples of knots $\{L_m\}_{m=1}^\infty$ in $\G_{3}$ with slice genus bounded above by $3$ and for any $C\in\N$ there is a positive integer $N$ such that for all $n\geq N$, $g_{1}(L_n)> C$.  Our family is inspired by Cochran-Harvey-Leidy's family $J_n$ (cf.~\cite{CHL07b}).



Cochran-Harvey-Leidy defined their knots by infecting along the curves $\alpha$ and $\beta$ in Figure~\ref{infectingcurves}.  We cannot use these curves for the purpose of constructing knots bounding Gropes because the two punctured tori bounded by $\alpha$ and $\beta$ intersect.  As per~\cite[Lemma 3.9]{CT07}, we find curves $\alpha'$ and $\beta'$ that are homotopic to $\alpha$ and $\beta$, respectively, and that bound disjoint height $1$ Gropes in $S^3-R$.  Since these curves are homotopic, the $n^{\textrm{th}}$-order signatures will not distinguish our examples from the examples of~\cite{CHL07b}.  However, our examples are probably not concordant to theirs.

\begin{figure}[h!]
	\begin{center}
		\begin{picture}(330, 160)(0,0)
			\put(26, 65){\small$\alpha$}
			\put(104, 65){\small$\beta$}
			\put(188, 48){\small$\alpha'$}
			\put(195, 90){\small$\beta'$}
			\put(200, 100){\vector(1,3){4}}
			\includegraphics[bb=0 0 777 398, scale=.4]{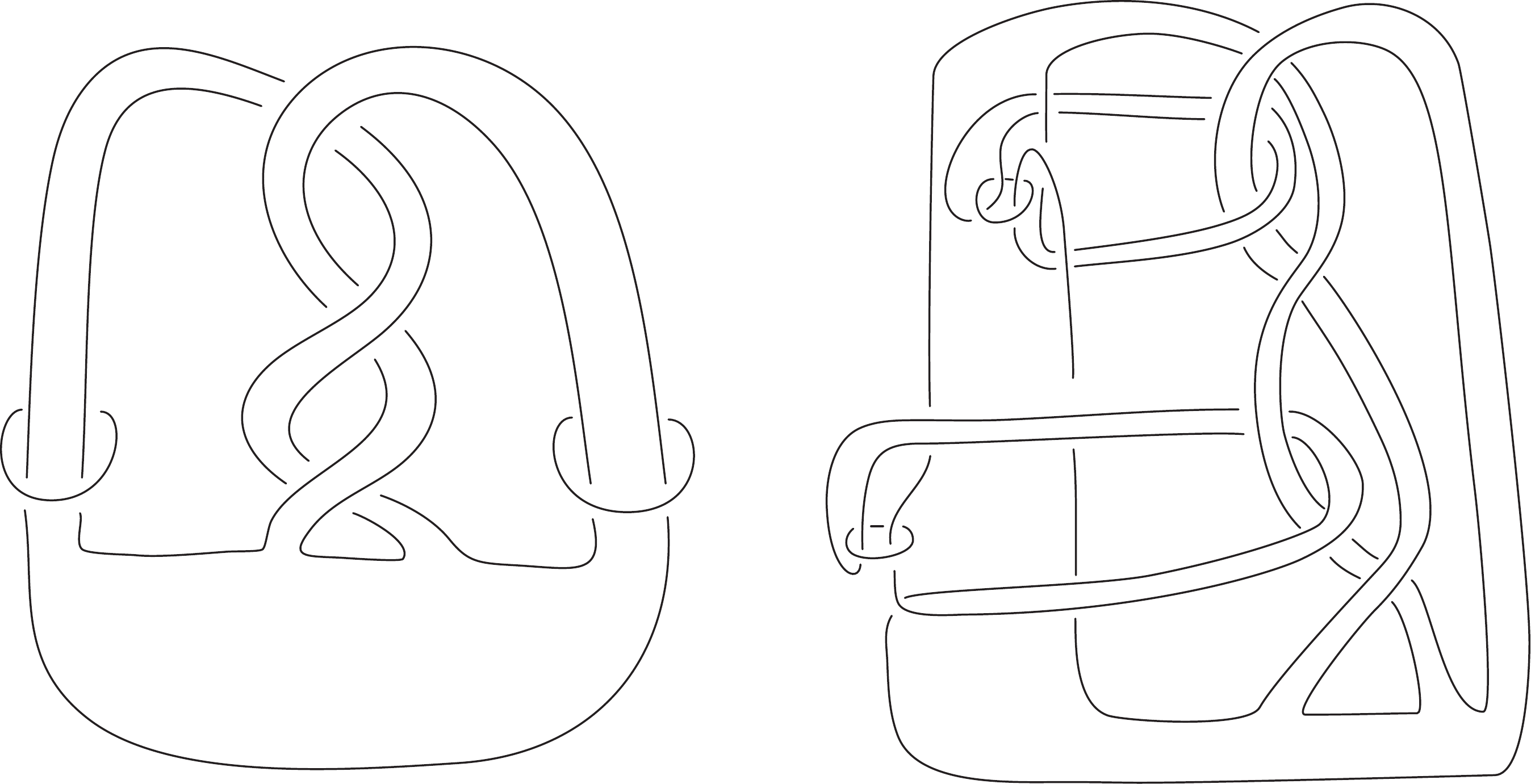}
		\end{picture}
		\caption{The infection curves $\alpha$ and $\beta$, and homotopic infection curves $\alpha'$ and $\beta'$.}
		\label{infectingcurves}
	\end{center}
\end{figure}

Now, let $J$ be the knot from~\cite{CT07} and let $J_m=\#_m J$.  $J_m$ no longer refers to the knots from~\cite{CHL07b}.  Let $L_m$ be infection on $R=9_{46}$ along $\alpha'$ and $\beta'$ by $J_m$.  We chose $\alpha'$ and $\beta'$ so that they bound disjointly embedded punctured tori in the complement of $R$, so by~\cite[Proof of Theorem 3.7]{CT07} the knots $L_m$ will bound Gropes of height $3$ in $D^4$.  Since $\alpha'$ and $\beta'$ lie off of a genus 3 Seifert surface for $R$, $L_m$ will have slice genus less than or equal to three.

Let $V$ be a $(1)$-solution for $M=M_{L_m}$.  Let $\pi=\p(V)$. Since $H_1(V)\cong \Z$ is torsion-free, we conclude $H_1(V)\cong\pi/\pi^{1}\cong\pi/\pi_{r}^{1}\cong\Z$.  Let $\phi:\p(M)\xrightarrow{i_\ast}\pi\twoheadrightarrow\pi/\pi_{r}^{1}$.  Since $i_\ast:H_1(M)\xrightarrow{\cong}H_1(V)\cong\pi/\pi_{r}^{1}$, we see that $\phi:\p(M)\twoheadrightarrow H_1(M)\xrightarrow{i_\ast\cong} H_1(V)$.  For emphasis, let $H_1(M;\Q[s,s\inv])$ denote the first homology of the infinite cyclic cover of $M$ as a $\Q[s,s\inv]$-module, where $H_1(M)=\langle s\rangle$, and let $H_1(M;\Q[t,t\inv])$ denote the first homology induced by the coefficient system $\phi:\p(M)\to\pi/\pi_{r}^{1}$.  The curves $\alpha$ and $\beta$ generate $H_1(M;\Q[s,s\inv])$, and since $\alpha'$ and $\beta'$ are homotopic to these generators, $\alpha'$ and $\beta'$ also generate $H_1(M;\Q[s,s\inv])$.  Since the coefficient system $\phi$ is $\p(M)\twoheadrightarrow H_1(M)$ followed by an isomorphism, $\alpha'$ and $\beta'$ generate $H_1(M;\Q[t,t\inv])$.

Cochran-Orr-Teichner proved that the coefficient system $\phi$ induces a hyperbolic bilinear form $Bl(\cdot,\cdot)$ defined on $H_1(M;\Q[t,t\inv])$~\cite[Theorem 2.13]{COT03} and that $${\mathfrak{k}}:=\ker\{i_\ast: H_1(M;\Q[t,t\inv])\to H_1(V;\Q[t,t\inv])\}$$ satisfies ${\mathfrak{k}}={\mathfrak{k}}^\perp$ with respect to this form~\cite[Theorem 4.4]{COT03}.  Since this form is hyperbolic and $\a'$ and $\b'$ generate $H_{1}(M:\Q[t,t\inv])$, $Bl(\alpha',\beta')$ is nonzero, and hence one of $\alpha'$ and $\beta'$ is not in $\mathfrak{k}$.  By the bilinearity of $Bl$, all integer multiples of $\alpha'$ or $\beta'$ are not in $\mathfrak{k}$.  Recall that $H_{1}(V;\Q[t,t\inv])$ is the first homology of the infinite-cyclic cover $\widetilde V$ of $V$, viewed as a $\Q[t,t\inv]$-module, and $\p\left(\widetilde V\right)=\pn{V}{1}$.  If $\alpha'$ were to map to zero in $H_{1}(V;\Q[t,t\inv])$, then $\alpha'$ would map into $\pn{V}{2}$.  Since no multilple of $\alpha'$ (or of $\beta'$) lie in $\mathfrak k$, we conclude that $\alpha'$ or $\beta'$ does not map into $\pr{V}{2}$.  As in Theorem~\ref{arbhinog}, we have the following relationship between the $\rho$-invariants:
$$\rho(M,\phi)-\rho(M_R,\phi_R)=\epsilon_{\alpha'}\rho_0(J_m)+\epsilon_{\beta'}\rho_0(J_m)$$

Since one of $\alpha'$ and $\beta'$ does not map into $\pr{V}{2}$, one of $\epsilon_{\alpha'}$ or $\epsilon_{\beta'}$ is one, as discussed in the proof of Theorem~\ref{arbhinog}.  By choosing $m$ sufficiently large, the number $|\rho(M,\phi)|$ can be made arbitrarily large.  Since $V$ was an arbitrary $(1)$-solution, we have that $g_1(L_m)$ is arbitrarily large by Theorem~\ref{sigboundgen}.
\end{exam}

\section{Applications to a Geometric Structure on the Grope Filtration}

Let $B^{r}_{n}$ denote the subset of all $K$ in $\G_{n+2}$ such that $g_{n}(K)\leq r$.  Since $g_{-1}\leq g_{0}\leq \cdots\leq g_{n}$, we see that $B_{-1}^{r}\supseteq B_{0}^{r}\supseteq \cdots \supseteq B_{n}^{r}$.  Our main result (Theorem~\ref{arbhinog}) is that the higher-order genera are finer measures than the slice genus.  Furthermore, by the remark after Corollary~\ref{nogisbetterthanslicegenus}, the $n^{\textrm{th}}$-order genus is a finer measure than the lower-order genera, up to order at least $n-2$.    That is, some (depending on $n$ and $r$) of these subset containments are proper.  Consequently, these higher-order genera provide a further refinement of the Grope filtration of the knot concordance group.  That is, after determining how deep a knot lies in the Grope filtration (say in $\G_{n+2}$), one might try to determine the knot's $n^{\textrm{th}}$-order genus.

We attempt to complement these comments with the diagram in Figure~\ref{refinement}.  The ambient three-dimensional space represents $\G_{n+2}$, the plane represents $\G_{n+3}$, the line represents $\G_{n+4}$, and the origin represents $\displaystyle\bigcap_{n\geq 0} \G_{n}$.  The corresponding balls have been drawn.  The diagram suggests the existence of knots in $B^{r}_{n}-B^{r}_{n+1}$, which was proven in Theorem~\ref{arbhinog} and Corollary~\ref{nogisbetterthanslicegenus} for certain $n$ and $r$.
\newpage
\begin{figure}[ht!]
	\begin{center}
		\begin{picture}(263, 185)(0,0)
			\put(240, 160){$\G_{n+2}$}
			\put(165, 160){$\G_{n+3}$}
			\put(30, 160){$\G_{n+4}$}
			\put(130, 140){$B_{n}^{r}$}
			\put(145, 90){$B_{n+1}^{r}$}
			\put(110, 80){$B_{n+2}^{r}$}
			\includegraphics{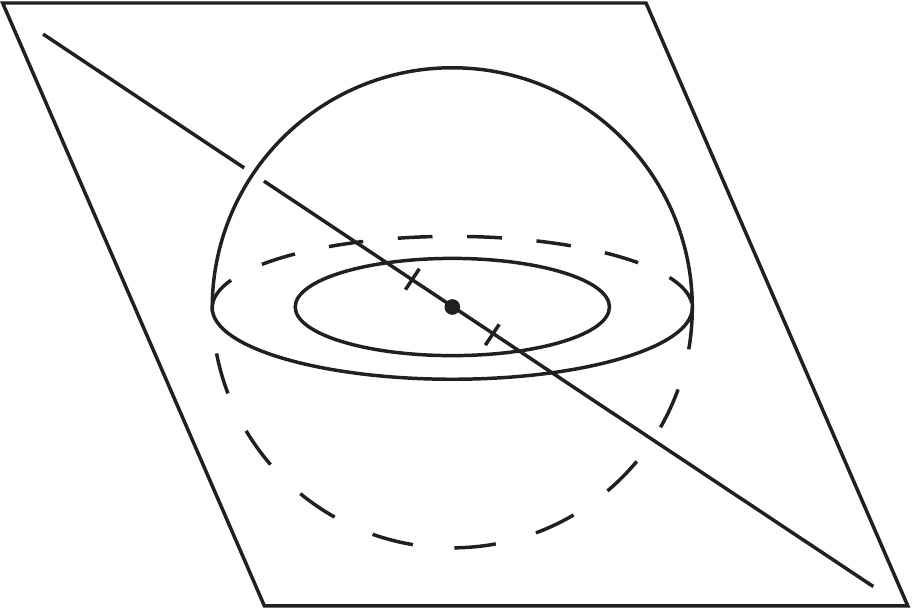}
		\end{picture}
		\caption{The refinement of the Grope filtration by the higher-order genera.}
		\label{refinement}
	\end{center}
\end{figure}

%
%
%
%

	\bibliographystyle{amsalpha}
	\bibliography{hogbib}

\providecommand{\bysame}{\leavevmode\hbox to3em{\hrulefill}\thinspace}
\providecommand{\MR}{\relax\ifhmode\unskip\space\fi MR }
\providecommand{\MRhref}[2]{%
  \href{http://www.ams.org/mathscinet-getitem?mr=#1}{#2}
}
\providecommand{\href}[2]{#2}
\begin{thebibliography}{COT03}

\bibitem[CG85]{CG85}
Jeff Cheeger and Mikhael Gromov, \emph{Bounds on the von {N}eumann dimension of
  ${L}^2$-cohomology and the {G}auss-{B}onnet theorem for open manifolds},
  Journal of Differential Geometry \textbf{21} (1985), 1--34.

\bibitem[Cha08]{jC08}
Jae~Choon Cha, \emph{Topological minimal genus and ${L}^2$-signatures},
  Algebraic \& Geometric Topology \textbf{8} (2008), 885--909.

\bibitem[CHL07]{CHL07b}
Tim Cochran, Shelly Harvey, and Constance Leidy, \emph{Knot concordance and
  higher-order {B}lanchfield duality}, Available at
  \url{http://arxiv.org/abs/0710.3082}, 2007.

\bibitem[Coc04]{tC04}
Tim Cochran, \emph{Noncommutative knot theory}, Algebraic \& Geometric Topology
  \textbf{4} (2004), 347--398.

\bibitem[COT03]{COT03}
Tim Cochran, Kent Orr, and Peter Teichner, \emph{Knot concordance, {W}hitney
  towers and ${L}^2$-signatures}, Annals of Mathematics \textbf{157} (2003),
  433--519.

\bibitem[CT07]{CT07}
Tim Cochran and Peter Teichner, \emph{Knot concordance and von {N}eumann
  $\rho$-invariants}, Duke Mathematical Journal \textbf{137} (2007), no.~2,
  337--379.

\bibitem[FQ90]{FQ90}
Michael Freedman and Frank Quinn, \emph{Topology of 4-manifolds}, Princeton
  Mathematical Series, no.~39, Princeton University Press, Princeton, NJ, 1990.

\bibitem[FT95]{FT95}
Michael Freedman and Peter Teichner, \emph{4-manifold topology {I}:
  {S}ubexponential groups}, Inventiones Mathematicae \textbf{122} (1995),
  509--529.

\bibitem[Gil82]{pG82}
Patrick Gilmer, \emph{On the slice genus of knots}, Inventiones Mathematicae
  \textbf{66} (1982), 191--197.

\bibitem[Hor]{pH08}
Peter Horn, \emph{The non-triviality of the {G}rope filtrations of the knot and
  link concordance groups}, Available at \url{http://arxiv.org/abs/0804.2661}.

\bibitem[Hor09]{pHthesis}
\bysame, \emph{Higher-order analogues of genus and slice genus of classical
  knots}, Ph.D. thesis, Rice University, 2009.

\bibitem[Liv04]{cL04}
Charles Livingston, \emph{Computations of the {O}zsv\'{a}th-{S}zab\'{o} knot
  concordance invariant}, Geometry and Topology \textbf{8} (2004), 735--742.

\bibitem[Mur65]{kM65}
Kunio Murasugi, \emph{On a certain numerical invariant of link types},
  Transactions of the American Mathematical Society \textbf{117} (1965),
  387--422.

\bibitem[OS03]{OS03}
Peter Ozsv{\'a}th and Zolt{\'a}n Szab{\'o}, \emph{Knot {F}loer homology and the
  four-ball genus}, Geometry \& Topology \textbf{7} (2003), 615--639.

\bibitem[Rol76]{dR76}
Dale Rolfsen, \emph{Knots and links}, Publish or Perish, Berkeley, CA, 1976.

\bibitem[Rud93]{lR93}
Lee Rudolph, \emph{Quasipositivity as an obstruction to sliceness}, Bulletin of
  the American Mathematical Society \textbf{29} (1993), no.~1, 51--59.

\end{thebibliography}

\end{document}